\newtheorem{theorem}{Theorem}[section]
\newtheorem{proposition}[theorem]{Proposition}
\newtheorem{definition}[theorem]{Definition}
\newtheorem{lemma}[theorem]{Lemma}
\theoremstyle{definition}
\def\SS{\mathbf{S}}
\def\SO{\mathbf{SO}}
\def\SL{\mathbf{SL}}
\def\N{\mathbf{N}}
\def\Z{\mathbf{Z}}
\def\R{\mathbf{R}}
\def\Q{\mathbf{Q}}
\def\C{\mathbf{C}}
\def\GL{\mathbf{GL}}
\renewcommand{\qed}{\quad\hskip0pt\null\hfill$\square$\par}
\title{Veech groups, irrational billiards and stable abelian differentials}
\author[Ferr\'an Valdez]{Ferr\'an Valdez}
\address{Max Planck Institut f\"{u}r Mathematik
Vivatsgasse 7.
53111, Bonn, Germany.}
\email{ferran@mpim-bonn.mpg.de}
\begin{document}

\begin{abstract}
We describe Veech groups  of flat surfaces arising from irrational angled polygonal billiards or
irreducible stable abelian differentials. For irrational polygonal billiards, we prove that these groups are 
non-discrete subgroups of $\rm SO(2,\mathbf{R})$ and we calculate their  rank.  
\end{abstract}

\maketitle

\section{Introduction}
The Veech group of a flat surface is the group of derivatives of orientation-preserving affine homeomorphisms. If the surface is compact, Veech groups are discrete subgroups of $\mathbf{SL}(2,\R)$ that can be related to the geodesic flow on the surface \cite{V}. Our main goal is to describe Veech groups arising from non-compact flat surfaces associated to billiards on an irrational angled polygon. Nevertheless, in this article we will not discuss dynamical aspects of geodesics.  More precisely, 
\begin{theorem}
	\label{theo}
Let $P\subset\R^2$ be a simply connected polygon with interior angles $\{\lambda_j\pi\}_{j=1}^N$, $S(P)$ the flat surface obtained from $P$ via the Katok-Zemljakov construction and $G(S)$ the Veech group of $S(P)$. Suppose there exists $\lambda_j\in\R\setminus\Q$ for some $j=1,\ldots,n$. Then, $G(S)<\SO(2,\R)$ and the group generated by the rotations
\begin{equation}
	\label{RS}
R(S)=<
\begin{pmatrix}
  \cos(2\lambda_j\pi) & -\sin(2\lambda_j\pi) \\
  \sin(2\lambda_j\pi) & \cos(2\lambda_j\pi)
\end{pmatrix}\mid
j=1,\ldots,N>
\end{equation}
has maximal rank in $G(S)$.
\end{theorem}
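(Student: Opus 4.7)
The plan is to establish three claims in sequence, yielding the theorem: (i) $R(S) \subseteq G(S)$, (ii) $G(S) \subseteq \SO(2,\R)$, and (iii) the quotient $G(S)/R(S)$ is torsion, which is equivalent to $R(S)$ having maximal $\Z$-rank in the abelian group $G(S)$.

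For (i), I exploit the explicit Katok--Zemljakov description of $S(P)$ as a union of copies of $P$ indexed by the linear dihedral group $D_P$ (generated by reflections in the sides of $P$), with sides identified via these reflections. Each rotation $R_j := R_{2\lambda_j\pi}$ equals the product of the two reflections in the sides meeting at vertex $v_j$, so lies in the orientation-preserving subgroup $D_P^+$. Left multiplication by $R_j$ on the index set respects the gluings and induces an affine self-map of $S(P)$ with derivative $R_j$, yielding $R(S) \subseteq G(S)$.

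For (ii), I use the linear holonomy. The image of the holonomy representation $\rho: \pi_1(S(P)\setminus\Sigma) \to O(2)$ is precisely $D_P^+$ (loops returning to the original copy accumulate an even number of reflections whose product lies in $D_P^+$), and it is dense in $\SO(2,\R)$ by the irrationality hypothesis. Any derivative $A$ of an affine self-map $f$ satisfies $A\rho(\gamma)A^{-1} = \rho(f_*\gamma) \in D_P^+$, so $A$ normalizes $D_P^+$ and, by continuity of conjugation, its closure $\SO(2,\R)$. A short polar-decomposition argument identifies the normalizer of $\SO(2,\R)$ in $\GL^+(2,\R)$ as $\R_{>0}\cdot\SO(2,\R)$, so $A = \lambda R$ with $\lambda > 0$ and $R \in \SO(2,\R)$. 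Since $D_P$ acts isometrically on $\mathbf{R}^2$, the set of saddle-connection lengths in $S(P)$ equals the finite set of pairwise vertex distances in $P$; as $f$ must permute this finite set by multiplication by $\lambda$, and $\lambda > 0$, necessarily $\lambda = 1$, so $A \in \SO(2,\R)$.

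For (iii), the main step, let $R_\alpha \in G(S)$ be realized by $f$. The map $f$ sends saddle connections to saddle connections, and in particular sides of copies of $P$ to sides of copies of $P$ (the sides being distinguished among saddle connections as the boundaries of the tiles of $S(P)$). Hence $R_\alpha$ preserves the set $\Theta \subset \R/2\pi\Z$ of directions of such sides, which is the $D_P$-orbit of the side-directions $\{\theta_1, \ldots, \theta_N\}$ of $P$. After fixing coordinates so that $\theta_1 = 0$, the relation $\theta_{i+1}-\theta_i = (1-\lambda_{i+1})\pi$ places every $\theta_i$, every rotation angle of an element of $D_P^+$, and the reflection shifts $2\theta_i$, into the subgroup $H := \pi\Z + \Z\langle\lambda_1\pi, \ldots, \lambda_N\pi\rangle$ of $\R$. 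From $R_\alpha\theta_1 \in \Theta$ one deduces $\alpha \in H \pmod{2\pi}$, whence $2\alpha \in \Z\langle 2\lambda_1\pi, \ldots, 2\lambda_N\pi\rangle \pmod{2\pi}$ and $R_\alpha^2 \in R(S)$. Therefore $G(S)/R(S)$ is $2$-torsion, giving the maximal rank assertion.

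The hard part will be (iii): one must first justify that $f$ really does send sides of copies of $P$ to sides (rather than to arbitrary saddle connections of the same length), and then extract the arithmetic constraint $\alpha \in H \pmod{2\pi}$ from the invariance of the side-direction set, handling uniformly both orientation-preserving rotations and orientation-reversing reflections in $D_P$; the factor of two in passing from $\Z\langle\lambda_j\pi\rangle$ to $\Z\langle 2\lambda_j\pi\rangle$ accounts for the (at most $2$-)torsion that can appear in $G(S)/R(S)$.
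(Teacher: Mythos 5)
Your three-step skeleton (show $R(S)\le G(S)$, then $G(S)\le\SO(2,\R)$, then show the quotient is torsion) is exactly the paper's, and your step (i) is essentially the paper's holonomy computation read off from the tiling of $S(P_0)$ by copies of $P$; that part is fine. Step (ii), however, rests on two claims that are false as stated. First, $S(P)$ is by construction the \emph{minimal translation cover}, so the linear holonomy of $\pi_1(S(P)\setminus\Sigma)$ is trivial, not $D_P^+$; the representation with image $D_P^+$ lives on the punctured sphere $\mathbf{S}^2(P)$, and to invoke the conjugation relation you would need to show that an affine homeomorphism of $S(P)$ descends to, or at least normalizes the deck group of, the cover $S(P_0)\to\mathbf{S}^2(P)$, which is not automatic. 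Second, the set of saddle-connection lengths of $S(P)$ is not the finite set of pairwise vertex distances: saddle connections project under $f\circ\pi$ to \emph{generalized diagonals} of the billiard in $P$ (vertex-to-vertex billiard trajectories with arbitrarily many bounces), so the length set is infinite and unbounded. What is true, and what the paper uses, is only that this set is bounded below by some $c(P)>0$; combined with the density of the $R(S)$-orbit of a minimal-length holonomy vector $v$ in the circle of radius $|v|$, any $A\in G(S)\setminus\SO(2,\R)$ would produce a strictly shorter holonomy vector. Your conformal-factor argument can be repaired along these lines (the positive lower bound together with invariance under $\lambda^{\pm1}$ forces $\lambda=1$), but not as written.

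The gap you flag in (iii) is the fatal one: nothing guarantees that an affine (even isometric) homeomorphism of $S(P)$ carries tile edges to tile edges. It permutes saddle connections, but the edges of the tiling are not intrinsically distinguished among saddle connections by the flat structure (and when some $\lambda_j=1/q$ the corresponding points of $S(P)$ are regular, so some tile edges are not saddle connections at all), so the invariance of the direction set $\Theta$ is unsupported and the arithmetic built on it does not get off the ground. The paper's route avoids this entirely: if $D\varphi$ had infinite order in $\SO(2,\R)/R(S)$, then the projections $f\circ\pi\circ\varphi^k(\gamma_0)$ of the iterates of a fixed saddle connection would yield infinitely many generalized diagonals of uniformly bounded length, contradicting Katok's theorem that the number of generalized diagonals of length at most $L$ is finite. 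That yields that $G(S)/R(S)$ is a torsion group, which is all the rank statement requires; your approach would, if the tile-preservation step could be justified, give the stronger conclusion that the quotient is $2$-torsion, but as it stands the key step is missing.
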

\indent The surface $S(P)$ has infinite genus and only one end. A topological surface satisfying these two conditions is called a \emph{Loch Ness monster} \cite{Va2}.\\ 
\indent After a suggestion from M. M\"{o}ller, we consider Veech groups arising from stable abelian differentials at $\partial\Omega\overline{M_g}$, the boundary of the Deligne-Mumford compactification of the Hodge bundle $\Omega M_g$ (see \S 5, \cite{B} for a definition). On this boundary,  the notion of Veech group makes sense only for stable abelian differentials on an irreducible stable curve, called irreducible. In this direction, we prove the following:
\begin{proposition}
	\label{prop}
	Let $(X,\omega)\in\partial\rm\Omega\overline{\mathcal{M}_g}$ be an irreducible stable Abelian differential of genus $ g$. Suppose that there exists at least one node in $X$ where the 1--form $\omega$ has a pole. Let $ \{r_j,-r_j\}_{j=1}^k$ be the set of all residues of $\omega$ and define
\begin{equation}
	\label{N}
	\
	N:=<\{
\begin{pmatrix}
 1 & s \\
  0 & t
\end{pmatrix}\mid t\in\R^+, s\in\mathbf{R}\},-Id>.
\end{equation}
Let $G(X)=G(X,\omega)$ be the Veech group of $(X,\omega)$. Then,
\begin{enumerate}
\item If there exist $ i\neq j$ such that $ r_i/r_j\notin\mathbf{R}$, then $G(X)$ is finite.\\
\item If all residues of $\omega$, as vectors in $\mathbf{C}\simeq\mathbf{R}^2$ are parallel, then $G(X)<N$ is either conjugated to  a discrete subgroup or the equal to $N$.
\end{enumerate}
\end{proposition}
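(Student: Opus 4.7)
\noindent\emph{Proof plan.} The strategy rests on a single geometric observation, which I would first isolate as a lemma: every $A\in G(X)$ permutes the multiset of residues $\{\pm r_j\}_{j=1}^k\subset\R^2\simeq\C$. This is because an affine self-homeomorphism of $(X,\omega)$ with derivative $A$ must send polar nodes to polar nodes, and the residue vector at each such node transforms by $A$. Everything else will be extracted from this observation together with linear algebra.

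For part (1), if $r_i/r_j\notin\R$ then $r_i$ and $r_j$ are $\R$-linearly independent and span $\R^2$. Since any element of $\mathrm{GL}(2,\R)$ is determined by its action on a basis, the natural map $G(X)\to\mathrm{Sym}(\{\pm r_j\}_{j=1}^k)$ is injective, so $|G(X)|\le(2k)!$.

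For part (2), I would first conjugate $G(X)$ by a rotation $R\in\SO(2,\R)$ so that all residues lie on the horizontal axis, writing $r_j=(x_j,0)$. For any $A=\bigl(\begin{smallmatrix}a&b\\c&d\end{smallmatrix}\bigr)$ in the conjugated Veech group, the condition $Ar_j=(ax_j,cx_j)\in\{\pm r_\ell\}$ forces $c=0$; the induced map $x_j\mapsto ax_j$ must then be a permutation of the finite set $\{\pm x_j\}$, which (by comparing the largest and smallest moduli) forces $|a|=1$. Together with $\det A>0$, this shows the conjugated group sits inside $N$.

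The remaining step --- proving the dichotomy that inside $N$ the group $G(X)$ is either discrete or all of $N$ --- is the main obstacle. I would work in the two-dimensional solvable Lie group $N/\{\pm\Id\}$ and argue that if $G(X)$ fails to be discrete, then its accumulation at the identity can be exploited to produce both a full unipotent one-parameter subgroup $\{\bigl(\begin{smallmatrix}1 & s\\0 & 1\end{smallmatrix}\bigr):s\in\R\}$ and a nontrivial diagonal element, which together generate $N_0$. The delicate point is to exclude proper closed one-parameter subgroups of $N_0$ --- in particular the conjugates of $\{\operatorname{diag}(1,t):t>0\}$ by horizontal shears --- and this will require the geometric input that a continuous family of shears or stretches of $(X,\omega)$ must be simultaneously compatible with every polar node, a rigid condition that forces the family to be either trivial or maximal.
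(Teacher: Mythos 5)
Your treatment of part (1) and your verification that the rotated Veech group lands in $N$ both coincide with the paper's argument (the paper simply notes that $G(X)$ acts linearly on the finite set of residues, hence injects into a finite permutation group as soon as two residues span $\R^2$). The genuine gap is in the dichotomy of part (2), and it cannot be closed along the lines you sketch. A non-discrete subgroup of $N$ need not contain any one-parameter subgroup, and need not have closure equal to $N$: the group of unipotent matrices $\bigl(\begin{smallmatrix}1&s\\0&1\end{smallmatrix}\bigr)$ with $s\in\Q$ is non-discrete, is not conjugate to any discrete subgroup, and is very far from being all of $N$. So ``non-discrete $\Rightarrow$ equal to $N$'' is false for abstract subgroups of $N$; accumulation at the identity produces one-parameter subgroups only in the \emph{closure} of $G(X)$, and even density in $N$ would not give equality. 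The dichotomy is therefore a genuinely geometric statement, and the input you gesture at (a continuous family of shears compatible with the polar nodes) is not the mechanism that makes it work.

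The mechanism the paper uses is the set of holonomy vectors of saddle connections, which is a discrete subset of $\R^2$ preserved by $G(X)$. If some holonomy vector $v$ is non-horizontal, then the orbit map $A\mapsto Av$ is injective on $N$ (for $A=\pm\bigl(\begin{smallmatrix}1&s\\0&t\end{smallmatrix}\bigr)$ and $v$ with nonzero second coordinate, $Av=v$ forces $A=\Id$), so $G(X)$ injects into a discrete set and is itself discrete. If instead every holonomy vector is horizontal, the complement $X'$ of the cylinder part is empty and $X$ decomposes into finitely many infinite horizontal cylinders glued along horizontal boundaries; every $g\in N$ preserves horizontal lines and the circumferences of these cylinders, so one explicitly constructs an affine homeomorphism with derivative $g$ cylinder by cylinder, giving $G(X)=N$ by construction rather than by any density argument. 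Both halves of this argument --- the discreteness of the holonomy spectrum and the explicit cylinder construction --- are missing from your plan and would need to be supplied.
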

\indent Recently, Hubert and Schmith\"{u}sen \cite{HSc} have shown the existence of a countably family of \emph{infinite area}  origamis whose Veech groups are infinitely generated subgroups of $\SL(2,\Z)$. These origamis arise as $\Z$-covers of (finite area) genus 2 origamis. Motivated by this work, in the last section of this article we construct, for each $n\in \N$, an uncountable family of flat surfaces  $\mathcal{S}_n=\{ S_i\}_{i\in I}$ such that each $S_i$ is homeomorphic to the Loch Ness monster and the Veech group $G(S_i)<\SO(2,\R)$ is infinitely generated.\\
\\
\indent This article is organized as follows. We introduce the notion of \emph{tame} flat surface and extend the definition of some classical geometric invariants (saddle connections, Veech groups) to the non-compact realm in Section 2 . Loosely speaking, \emph{tame} flat surfaces present a discrete set of singularities, which are either of finite or infinite angle. We briefly recall the Katok-Zemljakov construction, the notion of stable Abelian differential and define Veech groups for irreducible nodal flat surfaces. Section 3 deals with the proof of Theorem \ref{theo} and Section 4 with the proof of Proposition \ref{prop}. Finally, Section 5 presents the construction of the family of flat surfaces $\mathcal{S}_n$ mentioned above.\\
\\
\textbf{Acknowledgments}. This article was written during a stay of the author at the Max Planck Institut f\"{u}r Mathematik in Bonn. The author wishes to express his gratitude to the administration and staff of the MPI for the wonderful working facilities and the atmosphere. The author acknowledges support from the Sonderforschungsbereich/Transregio 45 and the ANR Symplexe. The author thanks M. M\"{o}ller and M. Bainbridge for valuable discussions.

\section{Preliminaries} 
	\label{preliminaries}
\textbf{Non-compact flat surfaces}. Let $(S,\omega)$ be a pair formed by a connected Riemann surface $S$ and a holomorphic 1--form $\omega$ on $S$ which is not identically zero. Denote by $Z(\omega)\subset S$ the zero locus of the form $\omega$. Local integration of this form endows $S\setminus Z(\omega)$ with an atlas whose transition functions are translations of $\C$. The pullback of the standard translation invariant flat metric on the complex plane defines a flat metric  $d$ on $S\setminus Z(\omega)$. Let $\widehat{S}$ be the metric completion of $S$. Each point in $Z(\omega)$ has a neighborhood isometric to the neighborhood of $0\in \C$ with the metric induced by the 1--form $z^kdz$ for some $k>1$ (which is a cyclic finite branched covering of $\C$). The points in $Z(\omega)$ are called $\emph{finite angle singularities}$.
 Note that there is a natural embedding of $S$ into $\widehat{S}$.
\begin{definition}
A point $p\in \widehat{S}$ is called an \emph{infinite angle singularity}, if there exists a radius $\epsilon>0$ such that the punctured neighborhood:
\begin{equation}
	\label{infiniteangle}
	\{z\in\widehat{S}\mid 0<d(z,p)<\epsilon\}
\end{equation}
is isometric to the infinite cyclic covering of $\epsilon\mathbf{D}^*=\{w\in\C^*\mid0<\mid w\mid <\epsilon\}$. We denote by $Y_{\infty}(\omega)$ the set of infinite angle 
singularities of $\widehat{S}$.

\end{definition}
\begin{definition}
	\label{flatsurface}
	The pair $(S,\omega)$ is called a \emph{tame} flat surface if $\hat{S}\setminus S=Y_{\infty}(\omega)$.
	\end{definition}
	\indent One can easily check that flat surfaces arising from irrational polygons or stable abelian differentials are tame.
\begin{definition}
	\label{singgeo}
	A \emph{singular geodesic} of $S=(S,\omega)$ is an open geodesic segment in the flat metric $d$ whose image under the natural embedding $S\hookrightarrow\widehat{S}$ issues from a singularity of $\widehat{S}$, contains no point of $Y(\omega)$ in its interior and is not properly contained in some other geodesic segment. A \emph{saddle connection} is a finite length singular geodesic.  
\end{definition}
\indent To each saddle connection we can associate a \emph{holonomy vector}: we 'develop' the saddle connection in the plane by using local coordinates of the flat structure. The difference vector defined by the planar line segment is the holonomy vector. Two saddle connections are \emph{parallel}, if their corresponding holonomy vectors are linearly dependent. \\
\indent Let $\mathrm{Aff}_+(S)$ be the group of affine orientation preserving homeomorphisms of the flat surface $S$ (by definition $S$ comes with a distinguished 1--form $\omega$). Consider  the map \begin{equation}
	\label{eseq}
	 \mathrm{Aff}_+(S)\overset{D}\longrightarrow \mathbf{GL}_+(2,\R)
\end{equation}
 that associates to every $\phi\in \mathrm{Aff}_+(S)$ its (constant) Jacobian derivative $D\phi$. 
 \begin{definition}
 	\label{vgroup}
	Let $S$ be a flat surface. We call $G(S)=D(\mathrm{Aff}_+(S))$ the \emph{Veech group} of $S$.
 \end{definition} 
 \textbf{The Katok-Zemljakov construction}. In the following paragraph we recall briefly the definition of this construction. For details see \cite{Va2} and references within.\\
 \indent  Let $P_0$ denote the polygon $P$ deprived of its vertices. The identification of two disjoint copies of $P_0$ along "common sides" defines a Euclidean structure on the $N$-punctured sphere. We denote it by $\SS^2(P)$. This punctured surface is naturally covered by $S(P_0)$, the \emph{minimal translation surface} corresponding to $P$. We denote the projection of this covering by $\pi:S(P_0)\longrightarrow\SS^2(P)$. Call a vertex of $P$ \emph{rational}, if the corresponding interior angle is commensurable with $\pi$.
 When the set of rational vertices of $P$ is not empty, the translation surface $S(P_0)$ can be locally compactified by adding points "above" rational vertices of $P$. The result of this local compactification is a flat surface with finite angle singularities that we denote by $S(P)$.  If the set of rational vertices of $P$ is empty, we set $S(P)=S(P_0)$. In both cases, $S(P)$ is called the flat surface obtained from the polygon $P$ via the \emph{Katok-Zemljakov construction}. Remark that, in the case of rational polygons, some authors give a different definition (see \cite{MT} or \cite{GuT}).\\
 \\
\textbf{Stable Abelian differentials}. We recall briefly the notion of stable Abelian differential, following Bainbridge \cite{B}.\\
\indent A \emph{nodal Riemann surface} $ X$ is a finite type Riemann surface, \emph{i.e.} with finitely generated fundamental group, that has finitely many cusps which have been identified pairwise to form nodes. A connected component of a nodal Riemann surface $ X$ with its nodes removed is called a \emph{part} of $ X$, and the closure of a part of $ X$ is an \emph{irreducible component}. The genus of a nodal Riemann surface is the topological genus of the non-singular Riemann surface obtained by replacing each node in $ X$ with an annulus. A \emph{stable Riemann surface} is a connected nodal Riemann surface for which each part has negative Euler characteristic. A \emph{stable Abeliann differential} $\omega$ on a stable Riemann surface $ X$ is a holomophic 1--form on $ X$ minus its nodes such that its restriction to each part of $ X$ has at worst simple poles at the cusps and at two cusps which have been identified to form a node, the differential has opposite residues, if any. Nodes at which $\omega$ presents a pole are called \emph{polar nodes}.\\
\\
\textbf{Veech groups on stable Abelian differentials}. Let $(X,\omega)$ be a stable Abelian differential. We denote by ${\rm Aff_+}(X,\omega)$ the group of affine orientation preserving homeomorphisms of $X$. The Jacobian derivative $D\phi$ of an affine homeomorphism $\phi$ is constant on each irreducible component of $(X,\omega)$. In general, there is no canonical derivation morphism from the affine group of a stable Abelian differential onto $\GL_+(2,\R)$. Consider, for example, the genus 2 stable Abelian differential given by the following figure:
\begin{center}
  \label{nends}
\includegraphics[scale=0.3]{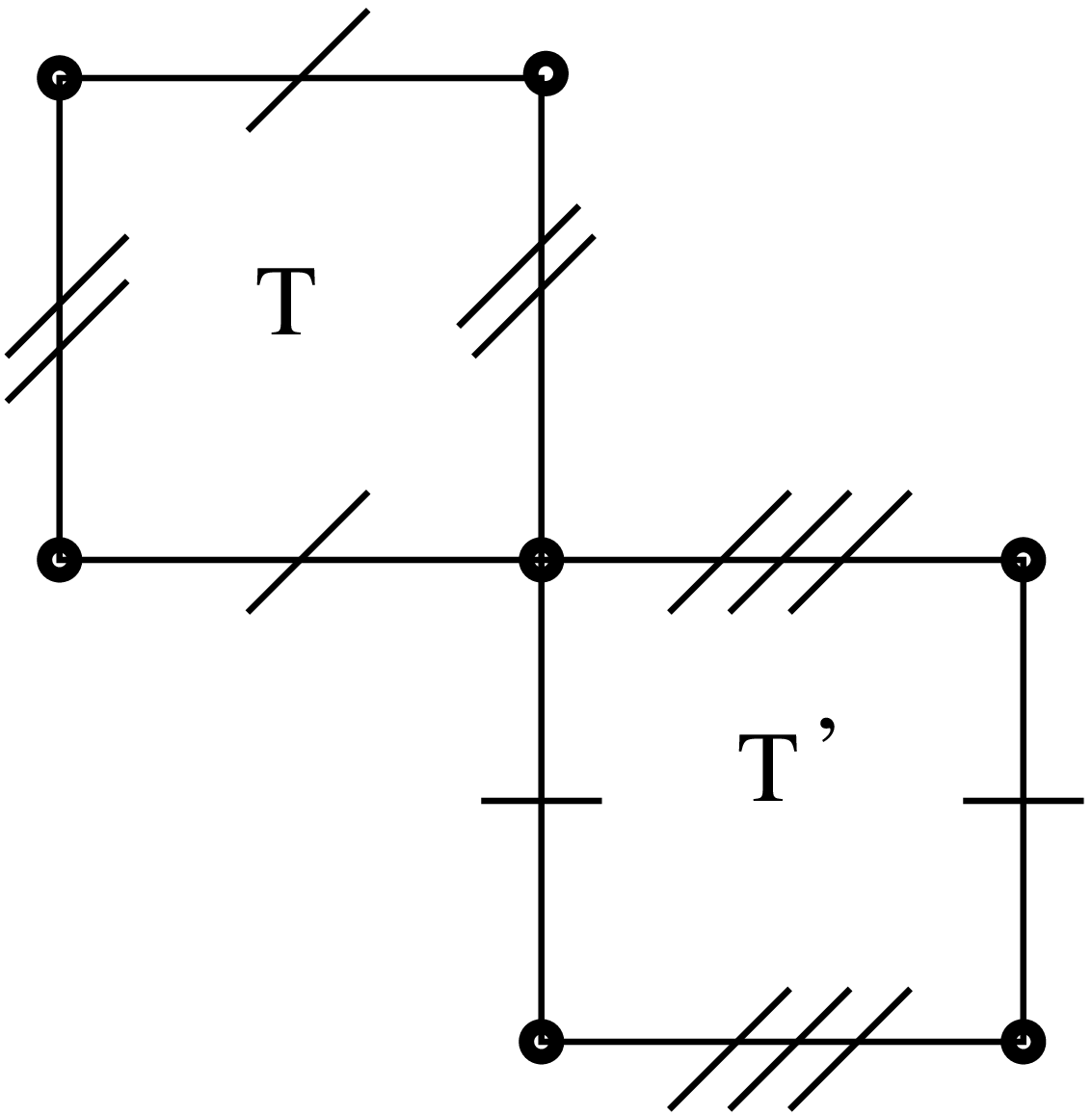}\\
Figure 1.
\end{center}
We avoid this situation by restricting ourselves to irreducible Riemann surfaces. 
\begin{definition}
Let $X=(X,\omega)$ be an \emph{irreducible} stable Abelian differential. We call $G(X)=D({\rm Aff_+}(X,\omega))$ the \emph{Veech group} of $X$.
\end{definition}
\indent Abelian differentials close to a stable Abelian differential $\rm (X,\omega)$ with a polar node develop very long cylinders which are pinched off to form a node in the limit, (see \S 5.3 \cite{B}). In the following figure we depict a genus two stable abelian differential with two nodes ( with residues $\rm \pm 1$ and $\rm \pm (1+i)$) and two double zeroes:
\begin{center}
\includegraphics[scale=0.28]{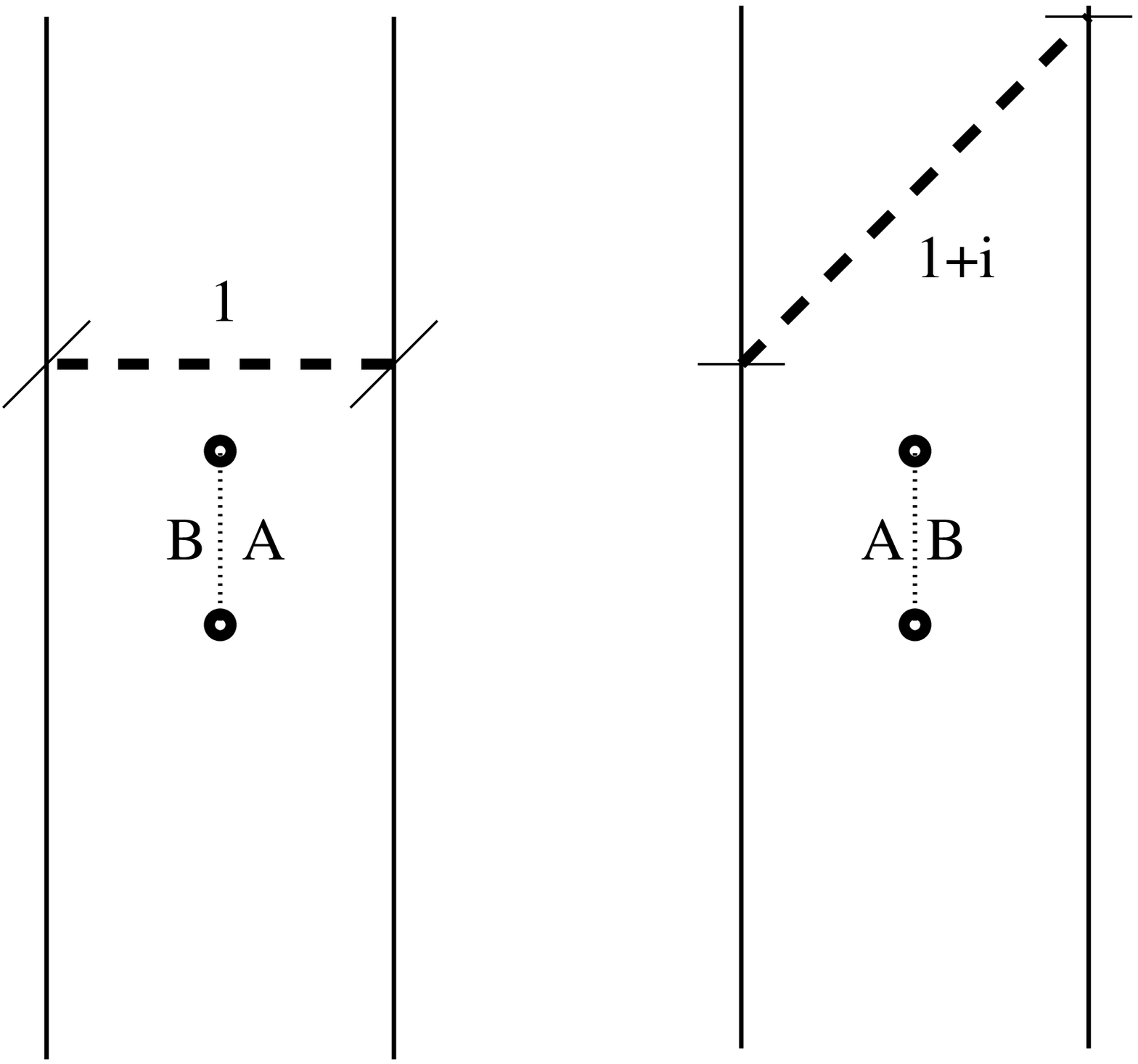}\\
Figure 2.
\end{center}
\indent When considering the flat metric, every stable Abelian differential deprived of its polar nodes is a complete metric space. We call \emph{singular geodesic} in the context of stable Abelian differentials, every geodesic segment that issues from a zero or a non-polar node of $\omega$, contains no such zero or non-polar node on its interior and is not properly contained in some other geodesic segment. As before, finite length singular geodesics will be called \emph{saddle connections}.\\
\\
\emph{Decomposition of stable Abelian differentials with polar nodes}. Suppose that $(X,\omega)$ has polar nodes with residues $r_1,\ldots,r_k$.  Every $ r_j$ defines a direction $\theta(r_j)\in\mathbf{R}/\mathbf{Z}$ for which $ (X,\omega)$ presents a set of disjoint infinite area cylinders $C_{1,j},\ldots, C_{n(j),j}$ foliated by closed geodesics parallel to $\theta(r_j)$ and whose length is $\mid r_j\mid$. Denote by $C_j$ the closure in $(X,\omega)$ of $\cup_{i=1}^{n(j)}C_{i,j}$ and $C=\cup_{j=1}^k C_j$. We define
\begin{equation}
	\label{Xprime}
 X':=X\setminus C	
\end{equation}
\indent The Veech group of $(X,\omega)$ acts linearly on the set of residues of $\omega$ and leaves the decomposition $X=X'\sqcup C$ invariant.

\section{Proof of Theorem \ref{theo}}
\label{pmtheo}
First, we prove that the matrix group $R(S)$ defined in (\ref{RS}) is a subgroup of $G(S)$. Then, we prove that $G(S)<\SO(2,\R)$ and, finally, that ${\rm Rank}(G(S))={\rm Rank}(R(S))$.\\
\\
\textbf{(i)} The locally Euclidean structure on the $N$-punctured sphere $\mathbf{S}^2(P)$ gives rise to the holonomy representation:
\begin{equation}
  \label{hol}
  \rm
 hol:\pi_1(\mathbf{S}^2(P))\longrightarrow Isom_+(\mathbf{R}^2)
\end{equation}
\indent  Let $B_j $ be a simple loop in $ \mathbf{S}^2(P)$ around the missing vertex of $ P$ whose interior angle is $\lambda_j\pi$, $\rm j=1,\ldots, N$. Suppose that $ B_j\cap B_i=*$, for $\rm i\neq j$. Then, $ \{B_j\}_{j=1}^N$ generates $\pi_1(\mathbf{S}^2(P),*)$. Given an isometry $\varphi\in \rm Isom_+(\mathbf{R}^2)$, we denote its derivative by $ D\circ\varphi$. A direct calculation in local coordinates shows that $\rm hol(B_j)$ is affine and that $M_j= D\circ \rm hol(B_j)$ is given by:
  \begin{equation}
  \label{rots}
  M_j=
\begin{pmatrix}
  \cos(2\lambda_j\pi) & -\sin(2\lambda_j\pi) \\
  \sin(2\lambda_j\pi) & \cos(2\lambda_j\pi)
\end{pmatrix}
\hspace{1cm} j=1,\ldots,N.
 \end{equation}
 \indent Since $G(S(P_0))=G(S(P))$, we conclude that $R(S)$ is a subgroup of $G(S)$.\\
\textbf{(ii)} We claim that length of every saddle connection in $S(P)$ is bounded below by some constant $c=c(P)>0$. Indeed, consider the folding map $f:\SS^2(P)\longrightarrow P$ which is 2-1
except along the boundary of $P$. The projection $f\circ \pi:S(P_0)\longrightarrow P$ maps every saddle connection $\gamma\subset S(P_0)$ onto a \emph{generalized diagonal} of the billiard game on $P$ (see \cite{K} for a precise definition). The length of $\gamma$ is bounded below by the length of the generalized diagonal $f\circ\pi(\gamma)$. The length of any generalized diagonal of the billiard table  $P$ is bounded below by some positive constant $c$ depending only on $P$. This proves our claim. The constant $c$ is realized by a generalized diagonal. Therefore, we can choose a  holonomy vector $v$ is of minimal length. Given that $R(S)<G(S)$, the $G(S)$-orbit of $v$ is dense in the circle of radius $|v|$ centered at the origin. This forces the Veech group $G(S)$ to lie in $\SO(2,\R)$.\\
\\
\textbf{(iii)} Suppose that there exist an affine homeomorphism $\varphi\in {\rm Aff}_+(S)$ such that $D\varphi$ is an infinite order element of $\SO(2,\R)/R(S)$. Let $\gamma_0$ be a fixed saddle connection. Then $\{f\circ\pi\circ\varphi^k(\gamma_0)\}_{k\in Z}$ is an infinite set of generalized diagonals of bounded length. But this is a contradiction, for the set of generalized diagonals of bounded length on a polygonal billiard is always finite \cite{K}.

\section{Proof of Proposition \ref{prop}}
	\label{boundarycase}
The Veech group of the irrational stable Abelian differential $(X,\omega)$ acts linearly on the (finite) set of residues of $\omega$. Therefore, if not all residues are parallel, $G(X)$ must be finite. \\
\indent Suppose now that all residues are parallel to the horizontal direction. Then $G(X)<N$. If every holonomy vector of $(X,\omega)$ is horizontal we claim that $G(X)=N$. Indeed, in this situation $X'$ defined in (\ref{Xprime}) is empty and the horizontal geodesic flow decomposes $X$ into finitely many cylinders with horizontal boundaries. This allows to define, for every $g\in N$, an orientation preserving affine homeomorphism of $X$ whose differential is exactly $g$. On the other hand, if at least one holonomy vector fails to be horizontal, then $G(X)<N$ is discrete, for the set of holonomy vectors of any stable Abelian differential is discrete.\\ \qed
\textbf{Remark}. Veech groups of irreducible stable Abelian differentials in $\partial\Omega\overline{\mathcal{M}_g}$ without polar nodes are as "complicated" as Veech groups of flat surfaces in $\Omega\mathcal{M}_g$ with marked points. More precisely, a nodal Riemann surface $X$ has a \emph{normalization} $ f: S(X)\longrightarrow X$ defined by separating the two branches passing through each node of $ X$. For every node $ p$, denote $\{p_+,p_-\}:=f^{-1}(p)$. Then, if the stable Abelian differential $(X,\omega)$ has no polar nodes, we have the equality:
\begin{equation}
	\label{notpolar}
	{\rm Aff}_+(X,\omega)=\{\phi\in {\rm Aff}_+(S(X),\omega)\hspace{1mm}|\hspace{1mm} \phi(p_+)=\phi(p_-),\hspace{1mm}\forall\hspace{1mm}p\hspace{1mm}\text{node of $X$}\}.
\end{equation}

\section{Infinitely generated Veech groups in $\SO(2,\R)$}
	\label{finalremarks}
\indent Fix $n\in\N$.  Consider an unbounded sequence of real numbers 
\begin{equation}
	\label{lasec}
	\rm x_0=0<x_1<x_2<\ldots<x_j<... 
\end{equation}	
such that $\rm x_{j+1}-x_j>1$ for all $j$. The segments of straight line joining the point $\rm (x_j,x_j^{2n})$ to\linebreak  $\rm (x_{j+1},x_{j+1}^{2n})$ and  $\rm (-x_j,x_j^{2n})$ to $\rm (-x_{j+1},x_{j+1}^{2n})$, $j\geq 0$, define a polygonal line $ \partial P$ in $\mathbf{C}$. Let $ int(P)$ be the connected component of  $ \mathbf{C}\setminus \partial P$ intersecting the positive imaginary axis $\rm Im(z)>0$. We define $ P=\partial P\cup int(P)$. We call $P$ the \emph{unbouded polygon} defined by the \linebreak sequence (\ref{lasec}). Remark that $P$ is symmetric with respect to the imaginary axis. For each $\rm j\geq 0$, let $\rm \lambda_j\pi$ be the interior angle of $ P$ at the vertex $\rm (x_j,x_j^{2n})$.\\
\begin{definition}
	\label{infires}
We say that a sequence of real numbers $\rm \{\mu_j\}_{j\geq 0}$ is \emph{free of resonances} if and only if for every finite subset $\rm \{\mu_{j_1},\ldots \mu_{j_N}\}$ the kernel of the group morphism $\Z^N\longrightarrow \C$ defined by
$$
\rm
(n_1,\ldots,n_N)\longrightarrow exp(2\pi i(\sum_{k=1}^Nn_k\mu_{j_k}))
$$
is trivial.\\
\end{definition}
\indent There are uncountable many choices $\{x_{j\geq 0}^i\}$, $i\in I$, for (\ref{lasec}), such that the sequence $\{\lambda_j^i\}_{j\geq 0}$ defining the interior angles of $P=P_i$ is free of resonances. For each $i\in I$, denote by $S^2(P_i)$ the identification of two vertexless copies of $P_i$ along "common sides". The Katok-Zemljakov construction described in Section \ref{preliminaries} can be applied to the unbounded polygon $P_i$. The result is a flat covering $S_i\longrightarrow \mathbf{S}^2(P_i)$.
\begin{lemma}
	\label{infelliptic}
	The flat surface $S_i$ is homeomorphic to the Loch Ness monster. The Veech group $G(S_i)<\SO(2,\R)$ is infinitely generated and contains the infinite rank group generated by the matrices
\begin{equation}
  \label{rotros}
\begin{pmatrix}
  \cos(2\lambda_j^i\pi) & -\sin(2\lambda_j^i\pi) \\
  \sin(2\lambda_j^i\pi) & \cos(2\lambda_j^i\pi)
\end{pmatrix}
\rm
\hspace{1cm} j\geq 0,
 \end{equation}		
\end{lemma}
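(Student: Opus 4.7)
The plan is to follow the three-step structure of the proof of Theorem~\ref{theo} and add a fourth step exploiting the free-of-resonances hypothesis. For the topology of $S_i$: since the sequence $\{\lambda_j^i\}$ is free of resonances, every $\lambda_j^i$ is irrational, so the Katok-Zemljakov construction produces an infinite-angle singularity above each vertex of $P_i$ and $S_i$ is tame. The arguments of \cite{Va2} --- infinitely many pairwise disjoint handles constructed from neighborhoods of distinct infinite-angle singularities yielding infinite genus, plus a connectedness argument on complements of large compacta (using the double-cover description $S_i\to\SS^2(P_i)$) yielding a single end --- go through with minor adaptations to identify $S_i$ as a Loch Ness monster.

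To see that the rotations in (\ref{rotros}) lie in $G(S_i)$, apply part (i) of the proof of Theorem~\ref{theo} verbatim: the holonomy around the $j$-th missing vertex of $\SS^2(P_i)$ has derivative $M_j^i$, and this isometry lifts to an affine self-homeomorphism of the flat cover $S_i$. For the inclusion $G(S_i)\subset\SO(2,\R)$, imitate part (ii): the projection $f\circ\pi: S_i\to P_i$ sends saddle connections to generalized diagonals of the billiard on $P_i$, so a uniform positive lower bound on the lengths of generalized diagonals produces a minimum-length holonomy vector $v_0$ in $S_i$. Since $R(S_i)$ already contains an irrational rotation, the $R(S_i)$-orbit of $v_0$ is dense in the circle of radius $|v_0|$, and any element of $G(S_i)$ outside $\SO(2,\R)$ would then produce arbitrarily short holonomy vectors, contradicting the minimality of $|v_0|$.

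For the infinite generation, identify $\SO(2,\R)\simeq\R/\Z$ so that $M_j^i$ corresponds to $2\lambda_j^i\bmod 1$. The free-of-resonances condition is exactly the $\Q$-linear independence of the countable set $\{2\lambda_j^i\}_{j\geq 0}$ inside $\R/\Z$, so the subgroup generated by the matrices (\ref{rotros}) is a free abelian group of infinite rank. Since $\SO(2,\R)$ is abelian and every finitely generated abelian group has finite rank, $G(S_i)$ --- which contains this infinite-rank subgroup --- cannot be finitely generated.

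The main obstacle I anticipate is the uniform lower bound on generalized diagonal lengths in the unbounded polygon $P_i$, since Katok's theorem \cite{K} is formulated only for bounded polygons. The natural remedy is to exploit the explicit separation hypothesis $x_{j+1}-x_j>1$: combined with the monotone shape of $\partial P_i$, this should produce a uniform positive sector embedded at every vertex of $P_i$, which yields a positive injectivity radius at every singularity of $S_i$ and hence a uniform lower bound on the lengths of all saddle connections.
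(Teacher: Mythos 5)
Your proposal follows essentially the same route as the paper: adapt the topological identification from \cite{Va2}, rerun parts (i) and (ii) of the proof of Theorem \ref{theo} with the separation condition $x_{j+1}-x_j>1$ supplying the lower bound on saddle-connection lengths, and derive infinite generation from the free-of-resonances hypothesis. Your explicit remark that a subgroup of the abelian group $\SO(2,\R)$ containing an infinite-rank free abelian group cannot be finitely generated is a welcome detail the paper leaves implicit, but it does not change the argument.
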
 
\begin{proof} Every flat surface obtained via the Katok-Zemljakov construction from a (bounded) polygon whose angles are all irrational multiples of $\pi$, is homeomorphic to a Loch-Ness monster. This is proved in Theorem 1 ( Case (A) and absence of resonances) in \cite{Va2}. The same conclusion can be drawn for the unbounded polygons $P_i$ after   replacing in the proof of Theorem 1 [\emph{Ibid.}]  \emph{polygon P} and surface $X(P)$ by \emph{unbounded polygon} $P_i$ and $S_i$, respectively.\\
\indent In \S\ref{pmtheo}, sections (i) and (ii) made use of the boundness of $P$ to assure that 
the length of every saddle connection in $S(P)$ was bounded below by a constant depending only on $P$. For unbounded polygons, this is ensured by the condition  $\rm x_{j+1}-x_j>1$, for all $j$, on the sequence (\ref{lasec}). It follows that, for every $i\in I$,  $G(S_i)<\SO(2,\R)$ and that this Veech group contains the group generated by the matrices (\ref{rotros}). This matrix group is infinitely generated, for the sequence $\{\lambda_j^i\}_{j\geq 0}$ is free of resonances, for every $i\in I$. \\
\end{proof}


\end{document}